\definecolor{lgreen}{rgb}{0.0, 0.48, 0.0}
\definecolor{lpurple}{rgb}{0.48, 0.0, 0.48}
  \setlist{nosep} % or \setlist{noitemsep}
\begin{document}

%% (obligatory)
%% \title[HEADLINE TITLE]{LONG TITLE \\ WITH TWO LINES}
%% The part [] is optional and can be deleted, if the title is short

\title{Symbolic method and directed graph enumeration\thanks{\'Elie de Panafieu is a member of Lincs \texttt{www.lincs.fr} and Sergey Dovgal is supported by the French ANR project MetACOnc, ANR-15-CE40-0014.
The order of the authors is alphabetical.}
}

%% (obligatory)
%% The name and the addresses of the first author ...
%% \author{F. Second_name}
%% \address{Street XY, Town, State}
%% current address, e-mail and url are optional
%% \curraddr{...}
%% \email{...@...}
%% \urladdr{...}

\author{%
\'Elie de Panafieu\\ {\normalsize Bell Labs France, Nokia}
\and
Sergey Dovgal\\ {\normalsize LIPN, Institut Galilée Université Paris 13}
}

%\author{\'Elie de Panafieu}
%\address{Nokia Bell Labs France}
%\curraddr{}
%\email{}
%\urladdr{}
%
%%% (optional)
%%% If there are more authors, then second author, third author contains
%%% the same items
%
%\author{Sergey Dovgal}
%\address{LIPN, Institut Galilée Université Paris 13}
%\email{}

%% (optional) If any thanks for the financial supports, grants, ...
%% \thanks{The first author was supported ...}

%% (optional) Keywords

%\keywords{} %directed graph, digraph, analytic combinatorics, generating functions

%% (obligatory)
%% AMS Classification 2000
%% The Primary classification is obligatory,
%% the Secondary classification is optional.
%% \subjclass{primary}{secondary}
%% f.e. \subjclass{35R35, 49M15, 49N50}{} or \subjclass{35R35, 49M15}{49N50}

%\subjclass{05C30, 05C20, 05A15}{}

%% (optional) Abstract

\maketitle

\begin{abstract}
We introduce the arrow product,
a systematic generating function technique
for directed graph enumeration.
It provides short proofs for previous results of Gessel
on the number of directed acyclic graphs
and of Liskovets, Robinson and Wright
on the number of strongly connected directed graphs.
We also recover Robinson's enumerative results
on directed graphs where all strongly connected components
belong to a given family.
%with given numbers
%of strongly connected components and source-like components using this new technique.

\noindent \textbf{keywords.} directed graph, digraph, analytic combinatorics, generating functions
\end{abstract}

%%%%% private macros, f.e. the different environments
\theoremstyle{definition}
\newtheorem{theorem}{Theorem}[section]
%\newtheorem{corollary}[theorem]{Corollary}
%\newtheorem{lemma}[theorem]{Lemma}
%\newtheorem{definition}[theorem]{Definition}
%\newtheorem{proposition}[theorem]{Proposition}
%\newtheorem{remark}[theorem]{Remark}

%Corollary
\newaliascnt{corollary}{theorem}
\newtheorem{corollary}[corollary]{Corollary}
\aliascntresetthe{corollary}
\providecommand*{\corollaryautorefname}{Corollary}

%Lemma
\newaliascnt{lemma}{theorem}
\newtheorem{lemma}[lemma]{Lemma}
\aliascntresetthe{lemma}
\providecommand*{\lemmaautorefname}{Lemma}

%Definition
\newaliascnt{definition}{theorem}
\newtheorem{definition}[definition]{Definition}
\aliascntresetthe{definition}
\providecommand*{\definitionautorefname}{Definition}

%Proposition
\newaliascnt{proposition}{theorem}
\newtheorem{proposition}[proposition]{Proposition}
\aliascntresetthe{proposition}
\providecommand*{\propositionautorefname}{Proposition}

%Remark
\newaliascnt{remark}{theorem}
\newtheorem{remark}[remark]{Remark}
\aliascntresetthe{remark}
\providecommand*{\remarkautorefname}{Remark}

% ---- typographic adjustments
\renewcommand{\geq}{\geqslant}
\renewcommand{\vec}{\boldsymbol}

% ------ USER-DEFINED COMMANDS ---------

\newcommand{\proba}{\mathds{P}}
\newcommand{\bigO}{\mathcal{O}}
\newcommand{\exactbigO}{\Theta}
\newcommand{\smallo}{o}
\newcommand{\integers}{\mathds{Z}}
\newcommand{\reals}{\mathds{R}}
\newcommand{\naturals}{\integers_{\geq 0}}
\newcommand{\complex}{\mathds{C}}
\newcommand{\permutations}{\mathfrack{S}}
\newcommand{\indic}{\mathds{1}}
\newcommand{\mA}{\mathcal{A}}
\newcommand{\mB}{\mathcal{B}}
\newcommand{\mC}{\mathcal{C}}
\newcommand{\mD}{\mathcal{D}}
\newcommand{\mF}{\mathcal{F}}
\newcommand{\mH}{\mathcal{H}}
\newcommand{\mJ}{\mathcal{J}}
\newcommand{\mK}{\mathcal{K}}
\newcommand{\mN}{\mathcal{N}}
\newcommand{\mM}{\mathcal{M}}
\newcommand{\mL}{\mathcal{L}}
\newcommand{\mT}{\mathcal{T}}
\newcommand*{\eg}{\textit{e.g.}\@\xspace}
\newcommand*{\ie}{\textit{i.e.}\@\xspace}
\newcommand*{\aka}{\textit{a.k.a.}\@\xspace}
\newcommand*{\resp}{resp.\@\xspace}
\newcommand{\Real}{\exponential{Re}}

% --- flexible definitions for EGF

%\newcommand{\exponential}[1]{\operatorname{#1}}
\newcommand{\exponential}[1]{\mathrm{#1}}

\newcommand{\eA}{\exponential{A}}
\newcommand{\eB}{\exponential{B}}
\newcommand{\eT}{\exponential{T}}
\newcommand{\egraph}{\exponential{G}}
\newcommand{\edigraph}{\exponential{D}}
\newcommand{\eset}{\exponential{Set}}
\newcommand{\econnected}{\exponential{C}}
\newcommand{\einv}{\exponential{Inv}}
\newcommand{\estrongly}{\exponential{SCC}}
\newcommand{\eW}{\exponential{W}}

% --- flexible definitions for GGF

\newcommand{\graphic}[1]{\mathbf{#1}}

\newcommand{\gA}{\graphic{A}}
\newcommand{\gB}{\graphic{B}}
\newcommand{\ggraph}{\graphic{G}}
\newcommand{\gdigraph}{\graphic{D}}
\newcommand{\gset}{\graphic{Set}}
\newcommand{\ginitially}{\graphic{IC}}
\newcommand{\gdag}{\graphic{DAG}}
\newcommand{\gW}{\graphic{W}}

\newcommand{\hadamard}{\odot}

% --- flexible definitions for combinatorial families

\newcommand{\family}[1]{\mathcal{#1}}

\newcommand{\fA}{\family{A}}
\newcommand{\fB}{\family{B}}
\newcommand{\fC}{\family{C}}
\newcommand{\fD}{\family{D}}
\newcommand{\fW}{\family{W}}
\newcommand{\fgraph}{\family{G}}
\newcommand{\fdigraph}{\family{D}}
\newcommand{\fset}{\family{Set}}
\newcommand{\finitially}{\family{IC}}
\newcommand{\fdag}{\family{DAG}}

\newcommand{\ic}{\mathit{ic}}

%------------- TIKZ --------------------
\definecolor{bblue}{rgb}{0.2, 0.4, 0.8}
\definecolor{bgreen}{rgb}{0.2, 0.6, 0.4}
\definecolor{bred}{rgb}{0.8, 0.4, 0.2}
\definecolor{bviolet}{rgb}{0.7, 0.2, 0.7}
\definecolor{blackred}{rgb}{0.6, 0.3, 0.3}
\definecolor{blackblue}{rgb}{0.3, 0.3, 0.6}

\usetikzlibrary{fit,arrows,trees,shapes,shapes.geometric,calc,matrix,
decorations.markings}
\tikzset{
  treenode/.style = {align=center, inner sep=0pt, text centered,
    font=\sffamily},
  arnBleuPetit/.style = {treenode, circle, bblue, draw=bblue,
    fill=bblue!10,
    minimum width=0.8em, minimum height=0.5em
  },
  arnRougePetit/.style = {treenode, circle, bred, draw=bred,
    fill=bred!40,
    minimum width=0.8em, minimum height=0.5em
  },
  arnBleuGrande/.style = {treenode, circle, bblue, draw=bblue,
    text width=1.5em, very thick,
    fill=bblue!10},
  arnVioletGrande/.style = {treenode, circle, bviolet, draw=bviolet,
    text width=1.5em, very thick,
    fill=bblue!10},
}

%%%% the main article

%%%%%%%%%%%%%%%%%%%%%%%%%%%%%%%%%%%%%%%%%%%%%%%%%%%%%%%%%%%
      \section{Introduction}
%%%%%%%%%%%%%%%%%%%%%%%%%%%%%%%%%%%%%%%%%%%%%%%%%%%%%%%%%%%

%%%%%%%%%%%%%%
%  \paragraph{Related works.}
%%%%%%%%%%%%%%

\sloppy
The enumeration of two important digraph families,
the \emph{Directed Acyclic Graphs} (DAGs)
and the strongly connected digraphs,
has been successfully approached at least since 1969.
Apparently, it was Liskovets~\cite{liskovets1969,liskovets1970number}
who first deduced a recurrence for the number
of strongly connected digraphs and also introduced and
studied the concept of initially connected digraph,
a helpful tool for their enumeration.
Subsequently, Wright~\cite{wright1971number} derived
a simpler recurrence for strongly connected digraphs
and Liskovets~\cite{liskovets1973} extended his techniques
to the unlabeled case.
Stanley counted labeled DAGs in~\cite{stanley1973acyclic}, and Robinson,
in his papers~\cite{robinson1973,robinson1977counting},
counted labeled and unlabeled DAGs with a given number of sources,
obtained exact and asymptotic results on counting of different families of
digraphs, including strongly connected digraphs,
which was the culmination of a series of publications he started in 1970
independently of Stanley.
In the unlabeled case, his approach is very much related
to the Species Theory~\cite{bergeron1998combinatorial}
which systematises the usage of cycle index series.
Robinson also announced~\cite{robinson1977strong}
a simple combinatorial explanation for the generating function
of strongly connected digraphs
in terms of the cycle index function.
%
%which finished the epoch of the exact counting
%of directed graphs in the 1970's.
Publications on the exact enumeration of digraphs slowed down,
until Gessel~\cite{GESSEL1995257} and Robinson~\cite{RobinsonGeneral}
in 1995, returned to the problem with the approach of \emph{graphic generating
functions} or \emph{special generating functions}\footnote{In fact, Robinson's
paper~\cite{robinson1973} from 1973
entitled ``Counting labeled acyclic digraphs''
already contains the notion of a special generating function and the method
analogous to the method that we describe in the current paper. Robinson also
obtains a simple expression for the generating function
of strongly connected digraphs, similar to~\autoref{th:strongly_connected}.
The authors have discovered Robinson's
papers~\cite{robinson1973,RobinsonGeneral} after the body of this work was finished and
accepted for a publication. We are solving here the same problem with a
similar method. We chose to maintain the publication, as we felt those results are of interest for the scientific community, and did not yet received the diffusion they deserve. Our further goal is to integrate these exact
enumeration methods into an asymptotic framework in the future.}.
It allowed them to enumerate
DAGs by marking sources and sinks~\cite{G96} and digraphs by marking source-like
and sink-like components.

%Long version: asymptotic enumeration. To be removed in the short one

The symbolic method~\cite{bergeron1998combinatorial,FS09}
is a dictionary that translates combinatorial operations
into generating function relations.
In particular, it allows to manipulate the generating functions
directly, avoiding working at the coefficient level.
Our contribution is twofold.
Firstly, we describe a new operation,
the \textit{arrow product} (\autoref{def:arrow_product}),
which enriches the symbolic method.
Secondly, we propose simple proofs, similar to those of \cite{RobinsonGeneral},
for the generating functions of directed acyclic digraphs (DAGs),
strongly connected graphs (SCCs),
and digraphs where all SCCs belong to a given family.
Some variants are presented as well.

Similar techniques enabled precise description of simple graphs phase transition (see e.g.~\cite{JKLP93}),
%of the Erd\H{o}s--Rényi model 
%so we expect that the techniques developed here
so the techniques developed here might enable the study
of digraphs phase transition~\cite{Goldschmidt, luczak2009critical}.

In this paper, we consider directed graphs (digraphs)
with labeled vertices, without loops or multiple edges.
Two vertices $u$, $v$ can be simultaneously
linked by both edges $u \to v$ and $v \to u$.
We also consider simple graphs
which are undirected graphs with neither multiple edges nor loops.

%%%%%%%%%%%%%%%%%%%%%%%%%%%%%%%%%%%%%%%%%%%%%%%%%%%%%%%%%%%
    \section{The symbolic approach}
%%%%%%%%%%%%%%%%%%%%%%%%%%%%%%%%%%%%%%%%%%%%%%%%%%%%%%%%%%%

%%%%%%%%%%
  \subsection{Definitions}
%%%%%%%%%%

Consider a sequence \( (a_n(w))_{n = 0}^\infty \). Define the \textit{exponential generating function} (EGF) and the \textit{graphic generating function} (GGF)
(introduced in~\cite{GESSEL1995257}) of the sequence \( (a_n(w))_{n = 0}^\infty
\) as
\[
  \eA(z, w) := \sum_{n \geq 0} a_n(w) \frac{z^n}{n!}
  \quad \text{ and } \quad
  \gA(z, w) := \sum_{n \geq 0} \frac{a_n(w)}{(1+w)^{\binom{n}{2}}} \frac{z^n}{n!}.
\]
To distinguish EGF from GGF, the latter are written in bold characters.
The \emph{special generating functions} of \cite{RobinsonGeneral} correspond to GGFs with $w=1$.
%The variable $w$ will be an argument of most of our functions.
%To lighten the notations, it is often omitted.
The $n$th coefficient of a series $A(z)$ with respect to the variable $z$ is denoted by $[z^n] A(z)$, so
\(
  A(z) = \sum_{n \geq 0} ([x^n] A(x)) z^n
\).

The \emph{exponential Hadamard product} of two series
\(
A(z) = \sum_{n \geq 0} a_n \tfrac{z^n}{n!}
\)
and
\(
B(z) = \sum_{n \geq 0} b_n \tfrac{z^n}{n!}
\)
is denoted by and defined as
\[
    A(z) \hadamard B(z) =
\Big(
\sum_{n \geq 0} a_n \dfrac{z^n}{n!}
\Big)
\hadamard
\Big(
\sum_{n \geq 0} b_n \dfrac{z^n}{n!}
\Big)
:=
\sum_{n \geq 0} a_n b_n \dfrac{z^n}{n!}
.
\]
All Hadamard products are taken with respect to the variable $z$.
The Hadamard product can be used to convert between EGF and GGF
(see~\autoref{th:translation}).
The exponential Hadamard product should not be confused
with the ordinary Hadamard product
$\sum_n ([z^n] A(z)) ([z^n] B(z)) z^n$.

If \( \fA \) is a certain family of digraphs or graphs, we can associate to it a
sequence of series \( (a_n(w))_{n = 0}^\infty \), such that $[w^m] a_n(w)$ is
equal to the number of elements in $\fA$ with $n$ vertices and $m$ directed edges.
Consequently, we can associate both EGF and GGF to
the same family of digraphs or graphs.
%Analogously, we can associate EGF to any family of simple graphs. We don't have
%a direct interpretation for the graphic generating function of simple graphs.

An advantage of the symbolic method is its ability to keep track of a
collection of \emph{parameters} in combinatorial objects. The two default
parameters are the numbers of vertices and edges, and the arguments \( z \) and
\( w \) of a generating function \( F(z, w) \) correspond to these parameters.
As a generalization, we consider multivariate generating functions
\[
    \eA(z, w, \vec u)
    :=
    \sum_{n, \vec p}
        a_{n, \vec p}(w) \vec u^{\vec p}
    \frac{z^n}{n!}
    \quad \text{ and } \quad
    \gA(z, w, \vec u)
    :=
    \sum_{n, \vec p}
    \frac{a_{n,\vec p}(w) \vec u^{\vec p}}{(1+w)^{\binom{n}{2}}} \frac{z^n}{n!},
\]
where
\( \vec u = (u_1, \cdots, u_d) \) is the vector of variables,
\( \vec p = (p_1, \cdots, p_d) \) denotes a vector of parameters,
and the notation \( \vec u^{\vec p} := \prod_k u_k^{p_k} \)
is used. We say that the variable \( u_k \) \emph{marks} its corresponding
parameter $p_k$, see~\cite{FS09}.

%%%%%%%%%%
  \subsection{Combinatorial operations}
%%%%%%%%%%

The next proposition recalls classic operations on EGFs (see~\cite{FS09}),
which extend naturally to GGFs.

\begin{proposition} \label{th:symbolic_method}
Consider two digraph (or graph) families $\fA$ and $\fB$.
The EGF and GGF of the disjoint union of $\fA$ and $\fB$ are $\eA(z,w) + \eB(z,w)$ and $\gA(z,w) + \gB(z,w)$.
The EGF and GGF of the digraphs from $\fA$ where one vertex is distinguished
are $z \partial_z \eA(z)$ and $z \partial_z \gA(z,w)$.
The EGF of sets of digraphs from $\fA$ is $e^{\eA(z,w)}$.
The EGF of pairs of digraphs $(a, b)$ with $a \in \fA$ and $b \in \fB$ (relabeled so that the vertex labels of $a$ and $b$ are disjoint, see~\cite{FS09}) is $\eA(z,w) \eB(z,w)$.
If a variable \( u \) marks the number of specific items in the EGF \( \eA(z, w, u)
\) or the GGF \( \gA(z, w, u) \) of the family \( \fA \), then the EGF and GGF for
the objects \( a \in \fA \) which have a distinguished subset of these specific items
are \( \eA(z, w, u+1) \) and \( \gA(z, w, u+1) \). Replacing \( u \mapsto u-1 \)
corresponds to an inclusion-exclusion process.
\end{proposition}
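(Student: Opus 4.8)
The plan is to verify each of the five claims in \autoref{th:symbolic_method} by the same underlying strategy: reduce every statement to the combinatorial level, where the asserted generating-function identity becomes a counting bijection, and then observe that the GGF version follows from the EGF version once we account for the extra normalizing factor $(1+w)^{\binom{n}{2}}$. The key technical point is that all these classical constructions are \emph{edge-preserving} on each pair of vertices, so the weight $w$ counting edges is simply multiplicative in exactly the way that makes the EGF arguments carry over verbatim.

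First I would treat the three purely combinatorial operations — disjoint union, pointing, and the labeled product — since these are standard \cite{FS09} and the only thing to check is that the edge-counting variable $w$ behaves correctly. For the disjoint union, an object is an element of $\fA$ or of $\fB$ on the same vertex set, so $a_n(w) + b_n(w)$ is the counting polynomial in $w$, giving additivity of both EGFs and GGFs immediately. For pointing, distinguishing one of the $n$ vertices multiplies $a_n(w)$ by $n$, and since $z\partial_z$ sends $z^n/n!$ to $n\, z^n/n!$ while leaving the $(1+w)^{\binom{n}{2}}$ factor untouched, both the EGF and the GGF statements hold. For the labeled product I would invoke the standard relabeling argument: a pair $(a,b)$ on a combined vertex set of size $n$ is obtained by choosing which $k$ of the $n$ labels go to $a$, so the coefficient convolution $\sum_k \binom{n}{k} a_k(w) b_{n-k}(w)$ is exactly the EGF product; because an edge of the pair lies entirely inside $a$ or inside $b$ (there are no edges between the two parts), the $w$-weight factors as a product, confirming $\eA \eB$.

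Next I would handle the set construction, $e^{\eA(z,w)}$, which is the $\eset$ operator of the symbolic method: a set of objects from $\fA$ is an unordered partition of the vertex set into blocks each carrying an $\fA$-structure, and since again no edges run between distinct blocks the edge-weight is multiplicative across blocks, so the usual exponential formula applies with $w$ carried along as an inert coefficient parameter. I would note explicitly that this set statement is claimed only for the EGF (not the GGF), because a set of digraphs on disjoint vertex sets is \emph{not} a digraph on the union — the cross-block non-edges are forced rather than free — so the GGF normalization does not pass through the exponential; this is precisely the asymmetry the arrow product is later designed to repair, and I would flag it rather than attempt a GGF version here.

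Finally, for the marking/inclusion-exclusion substitution I would argue at the level of coefficients: if $a_{n,p}(w)$ counts objects with $p$ marked items, then $\eA(z,w,u+1) = \sum_{n,p} a_{n,p}(w)(u+1)^p z^n/n!$, and expanding $(u+1)^p = \sum_j \binom{p}{j} u^j$ shows that the coefficient of $u^j$ counts objects together with a chosen $j$-subset of their $p$ items, which is exactly distinguishing a subset; the GGF case is identical since the $(1+w)^{\binom{n}{2}}$ factor is independent of $u$ and $p$. The substitution $u \mapsto u-1$ then inverts this binomial transform, which is the algebraic form of inclusion–exclusion. I expect the main subtlety to be not any single calculation but the bookkeeping of \emph{which} statements admit a GGF analogue: the obstacle is to state precisely why disjoint union, pointing, product, and marking all commute with the $(1+w)^{\binom{n}{2}}$ normalization (because each construction either fixes $n$ within a block or splits the vertices into edge-independent parts) while the set operation does not, and this is the one place where care is genuinely required.
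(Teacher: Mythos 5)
Your proposal is correct, and there is in fact no internal proof to compare against: the paper states \autoref{th:symbolic_method} as a recollection of classical operations (citing \cite{FS09}) that ``extend naturally to GGFs,'' so your coefficient-level verification supplies exactly what the paper leaves implicit. Your item-by-item arguments are sound, and your reading of the statement --- that the set and pair constructions are claimed at the EGF level only, with the GGF product deliberately reserved for the arrow product --- matches the paper's design. One slip in your closing sentence is worth correcting, because it touches the very point the paper is built on: you list the \emph{product} among the operations that ``commute with the $(1+w)^{\binom{n}{2}}$ normalization,'' and it does not. Since $\binom{n}{2} = \binom{k}{2} + \binom{\ell}{2} + k\ell$ when $k+\ell = n$, the GGF of plain pairs (no cross edges) differs from $\gA(z,w)\,\gB(z,w)$ by a factor $(1+w)^{k\ell}$ in each term of the convolution; the product of GGFs is instead the GGF of the arrow product (\autoref{def:arrow_product}), where the edges from $a$ to $b$ are free. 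The correct dichotomy is that operations which keep the vertex set intact (disjoint union, pointing, marking, the substitutions $u \mapsto u \pm 1$) commute with the normalization, while operations that split the vertex set into parts (pairs, sets) do not, precisely because the $k\ell$ cross pairs go unaccounted for; note that your stated criterion, ``splits the vertices into edge-independent parts,'' would wrongly certify the set construction as well, and it contradicts your own (correct) treatment of sets earlier in the proof.
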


The next definition and proposition translate the combinatorial interpretation of the product of GGFs, already mentioned by \cite{RobinsonGeneral}, into the symbolic method framework.
Gessel also used it implicitely in several proofs (\eg \cite{G96}) at coefficient level, but did not express it at the generating function level. However, a combinatorial interpretation of the exponential of GGFs can be found in~\cite{GESSEL1995257,gessel1996tutte}.

\newcommand{\fromto}[2]{
    \path (#1) edge [black,thick,
    decoration={markings,mark=at position 1 with
    {\arrow[ultra thick,blackblue, rotate=0]{>}}}, postaction={decorate}
    ] node {} (#2);
}
\newcommand{\aprod}[2]{
    \path (#1) edge [blackred,thick,
    decoration={markings,mark=at position 1 with
    {\arrow[ultra thick,blackred, rotate=0]{>}}}, postaction={decorate}
    ] node {} (#2);
}

\begin{figure}[!htb]
   \begin{minipage}[t]{0.31\textwidth}
   \centering
\scalebox{0.8}{
\begin{tikzpicture}[>=stealth',thick, scale = 0.8]
\draw
node[arnBleuPetit](a) at ( 0, 0)  { }
node[arnBleuPetit](s) at ( 1,-1)  { }
node[arnBleuPetit](d) at ( 2, 0)  { }
node[arnBleuPetit](f) at ( 1, 1)  { }
;
\fromto{a}{d};
\fromto{a}{f};
\fromto{s}{a};
\fromto{d}{s};
\fromto{f}{d};
\draw
node[arnBleuPetit](q)  at (3.8, 0)  { }
node[arnBleuPetit](w)  at (3,  -1)  { }
node[arnBleuPetit](e)  at (4.8,-1)  { }
node[arnBleuPetit](r)  at (4.8, 0)  { }
node[arnBleuPetit](t)  at (3.8, 1)  { }
;
\fromto{q}{w};
\fromto{q}{r};
\fromto{e}{r};
\fromto{r}{t};
\fromto{t}{q};
\aprod{d}{t};
\aprod{d}{q};
\aprod{s}{w};
\node[rectangle,dashed,draw,fit=(a)(s)(d)(f), very thick,
      rounded corners=5mm,inner sep= 5pt, bgreen] {};
\node[rectangle,dashed,draw,fit=(q)(w)(e)(r)(t), very thick,
      rounded corners=5mm,inner sep= 5pt, bgreen] {};
\end{tikzpicture}
}
     \caption{The arrow product. The vertex labels have been omitted}\label{fig:arrow:product}
   \end{minipage}\hfill
   \begin{minipage}[t]{0.3\textwidth}
     \centering
\scalebox{0.9}{
\begin{tikzpicture}[>=stealth',thick, scale = 0.8]
\draw
node[arnBleuPetit](a) at ( 1,.8)  {}
node[arnBleuPetit](s) at ( 1,-1)  {}
node[arnBleuPetit](d) at (1.8,0)  {}
;
\draw
node[arnBleuPetit](q)  at ( 4, 0)  { }
node[arnBleuPetit](w)  at ( 3,-1)  { }
node[arnBleuPetit](e)  at ( 5,-1)  {}
node[arnBleuPetit](r)  at ( 5, 0)  { }
node[arnBleuPetit](t)  at ( 4, 1)  { }
;
\fromto{q}{w};
\fromto{q}{r};
\fromto{e}{r};
\fromto{r}{t};
\fromto{q}{t};
\aprod{d}{t};
\aprod{d}{q};
\aprod{s}{w};
\node[rectangle,dashed,draw,fit=(a)(s)(d), very thick,
      rounded corners=5mm,inner sep= 5pt, bgreen] {};
      \node[rectangle,dashed,draw,fit=(q)(w)(e)(r)(t), very thick,
      rounded corners=5mm,inner sep= 5pt, bgreen] {};
\end{tikzpicture}
}
     \caption{Symbolic method for DAG}\label{fig:dag:construction}
   \end{minipage}\hfill
   \begin{minipage}[t]{0.36\textwidth}
     \centering
\begin{tikzpicture}[>=stealth',thick, scale = 0.9]
\draw
node[arnBleuPetit,scale=0.9] (aA) at (1.2, 1.1)  {}
node[arnBleuPetit,scale=0.9] (aS) at ( .8,.4)  {}
node[arnBleuPetit,scale=0.9] (aD) at (1.6,.4)  {}

node[arnBleuPetit,scale=0.9] (sA)  at (3.0, .2) {}
node[arnBleuPetit,scale=0.9] (sS)  at (3.0,-.6) {}
node[arnBleuPetit,scale=0.9] (sD)  at (2.4,-.2) {}

node[arnBleuPetit,scale=0.9](dA) at ( 1.3, -1.2)   {}
node[arnBleuPetit,scale=0.9](dS) at ( 1.8,  -.7)   {}
node[arnBleuPetit,scale=0.9](dD) at ( 1.3,  -.2)   {}
node[arnBleuPetit,scale=0.9](dF) at (  .8,  -.7)   {}
;
\draw
node[arnBleuPetit,scale=0.9](q)  at ( 4, 0)  { }
node[arnBleuPetit,scale=0.9](w)  at ( 4,-1)  { }
node[arnBleuPetit,scale=0.9](e)  at ( 5,-1)  { }
node[arnBleuPetit,scale=0.9](r)  at ( 5, 0)  { }
node[arnBleuPetit,scale=0.9](t)  at ( 4, 1)  { }
;
\fromto{q}{w};
\fromto{q}{r};
\fromto{e}{r};
\fromto{r}{t};
\fromto{q}{t};
% scc in the marked part
\fromto{aA}{aS};
\fromto{aS}{aD};
\fromto{aD}{aA};
\fromto{sA}{sS};
\fromto{sS}{sD};
\fromto{sD}{sA};
\fromto{dA}{dS};
\fromto{dS}{dD};
\fromto{dD}{dF};
\fromto{dF}{dA};
\aprod{sA}{t};
\aprod{sA}{q};
\aprod{dA}{w};
\aprod{aA}{t};
\aprod{sS}{q};
\node[rectangle,dashed,draw,fit=(aA)(sA)(sS)(sD)(dA)(dS)(dD)(dF),
      rounded corners = 5mm,inner sep = 5pt, bgreen, very thick] {};
\node[rectangle,dashed,draw,fit=(q)(w)(e)(r)(t),
      rounded corners = 5mm,inner sep = 5pt, bgreen, very thick] {};
\node[rectangle,dashed,draw,fit=(dA)(dS)(dD)(dF),
      rounded corners = 3mm,inner sep = 2pt, bviolet, very thick] {};
\node[rectangle,dashed,draw,fit=(sA)(sS)(sD),
      rounded corners = 3mm,inner sep = 2pt, bviolet, very thick] {};
\node[rectangle,dashed,draw,fit=(aA)(aS)(aD),
      rounded corners = 3mm,inner sep = 2pt, bviolet, very thick] {};
\end{tikzpicture}
     \caption{Marking a subset of source-like SCC}\label{fig:scc:construction}
   \end{minipage}
\end{figure}

\begin{definition} \label{def:arrow_product}
We define the \emph{arrow product} of $\fA$ and $\fB$ as
the family \( \fC \) of pairs $(a, b)$, with $a \in \fA$, $b \in \fB$
(relabeled so that $a$ and $b$ have disjoint labels),
where an arbitrary number of edges oriented from vertices of $a$ to vertices of $b$ are added (see \autoref{fig:arrow:product}).
\end{definition}

\begin{proposition}
The GGF of the arrow product of the families $\fA$ and $\fB$
is equal to $\gA(z,w) \gB(z,w)$.
\end{proposition}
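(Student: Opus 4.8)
The plan is to verify the statement at the level of coefficients and then repackage the result as a product of exponential-type series. Write $c_n(w)$ for the polynomial whose coefficient of $w^\ell$ counts the digraphs in the arrow product $\fC$ on $n$ labelled vertices with exactly $\ell$ edges, so that its GGF is $\graphic{C}(z,w) = \sum_{n\ge 0} c_n(w)\,(1+w)^{-\binom{n}{2}}\,z^n/n!$. First I would fix $n$ and build an element of $\fC$ on the label set $[n]$ in three stages: choose which $k$ of the $n$ labels are assigned to the $a$-part (the remaining $m = n-k$ going to the $b$-part); choose $a\in\fA$ and $b\in\fB$ on these two label sets; and finally decide independently, for each of the $km$ ordered pairs (vertex of $a$, vertex of $b$), whether the corresponding arrow edge is present.

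This decomposition is a bijection, and tracking the edge count through each factor gives
\[
  c_n(w) = \sum_{k+m=n} \binom{n}{k}\, a_k(w)\, b_m(w)\, (1+w)^{km},
\]
where $\binom{n}{k}$ counts the label assignments and the factor $(1+w)^{km}$ records the free choice of the $km$ arrow edges, each either absent (contributing $1$) or present (contributing $w$).

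The only substantive step is then to see why the normalisation by $(1+w)^{\binom{n}{2}}$ turns this convolution into a clean product. The key observation is the elementary identity $\binom{n}{2} = \binom{k}{2} + \binom{m}{2} + km$, valid whenever $n = k+m$, which simply reflects that an unordered pair among the $n$ vertices lies either inside the $a$-part, inside the $b$-part, or is one of the $km$ mixed pairs. Dividing the formula for $c_n(w)$ by $(1+w)^{\binom{n}{2}}$ and applying this identity, the $(1+w)^{km}$ factor cancels exactly against the mixed-pair contribution, leaving
\[
  \frac{c_n(w)}{(1+w)^{\binom{n}{2}}}
  = \sum_{k+m=n} \binom{n}{k}
    \frac{a_k(w)}{(1+w)^{\binom{k}{2}}}\,
    \frac{b_m(w)}{(1+w)^{\binom{m}{2}}}.
\]
The right-hand side is precisely the coefficient of $z^n/n!$ in the product $\gA(z,w)\,\gB(z,w)$, by the usual Cauchy product rule $[z^n/n!](FG) = \sum_{k+m=n}\binom{n}{k}\,[z^k/k!]F\,[z^m/m!]G$ for series of exponential type (the GGFs being of this form). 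Summing over $n$ then yields $\graphic{C}(z,w) = \gA(z,w)\,\gB(z,w)$.

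I expect no serious obstacle: the combinatorial decomposition is the same labelled-product construction as for ordinary EGFs, and the entire content of the statement is that the GGF normalisation $(1+w)^{-\binom{n}{2}}$ was designed precisely so that the extra weight $(1+w)^{km}$ of the freely added arrow edges is absorbed. The one point deserving care is the bookkeeping of edges in the three groups (inside $a$, inside $b$, and across), which is governed by the binomial identity above; this is also exactly what makes the GGF, rather than the EGF, the natural generating function for the arrow product.
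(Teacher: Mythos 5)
Your proof is correct and follows essentially the same route as the paper's: both work at the coefficient level, using the decomposition into a label assignment, a choice of $a$ and $b$, and a free choice of the $k\ell$ crossing edges, with the identity $\binom{n}{2} = \binom{k}{2} + \binom{\ell}{2} + k\ell$ doing the normalisation work (the paper leaves this identity implicit in its coefficient extraction, whereas you state it explicitly). The only difference is direction — the paper expands the product $\gA(z,w)\gB(z,w)$ and then interprets the result combinatorially, while you start from the combinatorial decomposition and arrive at the product — which is the same argument read backwards.
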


\begin{proof}
Consider two digraph families $\fA$ and $\fB$,
with associated sequences $(a_n(w))$, $(b_n(w))$.
Then the sequence associated to the GGF $\gA(z,w) \gB(z,w)$ is
\begin{multline*}
    c_n(w)
    = (1+w)^{\binom{n}{2}} n! [z^n]
        \bigg( \sum_k \frac{a_k(w)}{(1+w)^{\binom{k}{2}}} \frac{z^k}{k!} \bigg)
        \bigg( \sum_{\ell} \frac{b_{\ell}(w)}{(1+w)^{\binom{\ell}{2}}} \frac{z^{\ell}}{\ell!} \bigg)
    = \binom{n}{k} \sum_{k+\ell = n} (1+w)^{k \ell} a_k(w) b_{\ell}(w).
\end{multline*}
This series has the following combinatorial interpretation:
it is the generating function (the variable $w$ marks the edges)
of digraphs with $n$ vertices, obtained by
\begin{itemize}%[noitemsep, topsep=0pt]
\item
choosing digraphs $a$ of size $k$ in $\fA$,
$b$ of size $\ell$ in $\fB$, such that $k+\ell = n$,
\item
choosing a subset of $\{1, \ldots, n\}$ for the labels of $a$
(and $b$ receives the complementary set for its labels),
\item
for any vertices $u$ in $a$, $v$ in $b$,
the oriented edge $(u,v)$ is or not added.
%each oriented edge $(u,v)$ with $u$ vertex from $a$, and $v$ vertex from $b$,
%is or not added.
\end{itemize}
Hence, $(c_n(w))$ is the sequence associated to the arrow product of $\fA$ and
$\fB$.
\end{proof}

%%%%%%%%%%%%%%%%%%%%%%%%%%%%%%%%%%%%%%%%%%%%%%%%%%%%%%%%%%%
    \section{Generating functions from the symbolic method}
%%%%%%%%%%%%%%%%%%%%%%%%%%%%%%%%%%%%%%%%%%%%%%%%%%%%%%%%%%%

We start by defining the building bricks for the symbolic method of directed
graphs.

\begin{proposition} \label{th:first_egf_ggf}
The EGF of all graphs $\egraph(z,w)$,
GGF of all digraphs $\gdigraph(z,w)$,
and GGF of sets $\gset(z,w)$ (labeled graphs that contain no edge) are
\[
  \egraph(z,w) = \gdigraph(z,w) = \sum_{n \geq 0} (1+w)^{\binom{n}{2}} \frac{z^n}{n!}
  \quad \text{ and } \quad
  \gset(z,w) = \sum_{n \geq 0} \frac{1}{(1+w)^{\binom{n}{2}}} \frac{z^n}{n!}.
\]
\end{proposition}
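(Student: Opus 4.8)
The plan is to prove the three claimed identities directly from the definitions of EGF and GGF, using the elementary fact that a labeled graph (or digraph) on a fixed vertex set is determined by independently deciding, for each eligible pair of vertices, whether to place an edge there.

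First I would count labeled simple graphs on $n$ vertices. There are $\binom{n}{2}$ unordered pairs of distinct vertices, and each pair either carries an edge or does not, so the number of graphs on $n$ labeled vertices is $2^{\binom{n}{2}}$. To track edges with the variable $w$, I observe that each of the $\binom{n}{2}$ potential edges independently contributes a factor $(1+w)$ (the term $1$ for ``absent'', the term $w$ for ``present''), so the edge-generating polynomial is $a_n(w) = (1+w)^{\binom{n}{2}}$. Substituting into $\egraph(z,w) = \sum_{n\geq 0} a_n(w)\, z^n/n!$ immediately yields the stated EGF.

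Next I would handle digraphs. Here each ordered pair $(u,v)$ with $u\neq v$ may independently carry the edge $u\to v$, and since the two orientations between a given pair are allowed simultaneously, there are $2\binom{n}{2} = n(n-1)$ independent potential directed edges. Thus the EGF of all digraphs would be $\sum_n (1+w)^{n(n-1)} z^n/n!$. The point of the \emph{GGF} normalization is precisely to absorb one factor of $(1+w)^{\binom{n}{2}}$: since the digraph sequence is $d_n(w) = (1+w)^{n(n-1)} = (1+w)^{2\binom{n}{2}}$, dividing by $(1+w)^{\binom{n}{2}}$ in the GGF definition leaves $d_n(w)/(1+w)^{\binom{n}{2}} = (1+w)^{\binom{n}{2}}$, so $\gdigraph(z,w) = \sum_n (1+w)^{\binom{n}{2}} z^n/n! = \egraph(z,w)$, giving the claimed coincidence of the two series. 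For the last identity, the family of ``sets'' consists of edgeless labeled graphs, of which there is exactly one on each vertex set, so its EGF sequence is the constant polynomial $1$; dividing by $(1+w)^{\binom{n}{2}}$ in the GGF then produces $\gset(z,w) = \sum_n (1+w)^{-\binom{n}{2}} z^n/n!$, as stated.

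There is no real obstacle here, since every step is a direct edge-counting argument followed by substitution into the relevant definition; the only point requiring care is the bookkeeping of the exponents, namely keeping straight that simple graphs have $\binom{n}{2}$ potential edges whereas digraphs have $n(n-1) = 2\binom{n}{2}$, and then checking that the GGF normalization exactly cancels the extra factor for digraphs. I would present the three computations in parallel to make the role of the normalization transparent, emphasizing that the equality $\egraph(z,w) = \gdigraph(z,w)$ is the structural reason GGFs are the natural tool for digraphs.
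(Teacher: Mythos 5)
Your proof is correct and follows essentially the same route as the paper's own: count the $(1+w)$ factor contributed by each of the $\binom{n}{2}$ potential undirected edges (respectively $n(n-1)$ potential directed edges, respectively the single edgeless graph), then substitute into the EGF/GGF definitions so that the normalization by $(1+w)^{\binom{n}{2}}$ yields the stated series. Your explicit remark that the GGF normalization cancels exactly half the exponent for digraphs, forcing $\gdigraph(z,w) = \egraph(z,w)$, is the same observation the paper makes, just stated more emphatically.
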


\begin{proof}
Consider a graph with $n$ vertices.
Each unordered pair of distinct vertices
is either linked by an edge, or not.
Thus, the sequence of series associated to the family of graphs and its EGF are
\[
  g_n(w) = (1+w)^{\binom{n}{2}}, \quad
  \egraph(z,w)
  = \sum_{n \geq 0} g_n(w) \frac{z^n}{n!}
  = \sum_{n \geq 0} (1+w)^{\binom{n}{2}} \frac{z^n}{n!}.
\]
%and its EGF is, by definition,
%\[
%  
%\]
%
In a digraph with $n$ vertices, each ordered pair of distinct vertices
is either linked by an oriented edge, or not.
So the sequence of series associated to the family of digraphs and its GGF are
\[
  d_n(w) = (1+w)^{n (n-1)}, \quad
  \gdigraph(z,w)
  = \sum_{n \geq 0} \frac{d_n(w)}{(1+w)^{\binom{n}{2}}} \frac{z^n}{n!}
  = \sum_{n \geq 0} (1+w)^{\binom{n}{2}} \frac{z^n}{n!}.
\]
%and its GGF is
%\[
%\]
%
There is exactly one labeled graph without any edges,
so the sequence of series associated to the set family and its GGF are
\[
  \mathit{set}_n(w) = 1, \quad
    \gset(z,w) = \sum_{n \geq 0} \frac{1}{(1+w)^{\binom{n}{2}}} \frac{z^n}{n!}.
\]
%and its GGF is
%\[
%\]
%The sequences associated to the families of graphs, digraphs and sets are
%\[
%  g_n(w) = (1+w)^{\binom{n}{2}}, \quad
%  d_n(w) = (1+w)^{n (n-1)}, \quad
%  \text{ and } \quad
%  \mathit{set}_n(w) = 1.
%\]
%The expressions of the EGFs and GGFs follow.
\end{proof}

\begin{corollary} \label{th:translation}
The EGF and GGF of a family $\fA$ are linked by the relations
\[
  \eA(z,w) = \egraph(z) \hadamard \gA(z,w)
  \quad \text{ and } \quad
  \gA(z) = \gset(z,w) \hadamard \eA(z,w).
\]
\end{corollary}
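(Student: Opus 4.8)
The plan is to prove both identities by directly unwinding the three definitions involved---the EGF, the GGF, and the exponential Hadamard product---so that everything collapses to a coefficientwise multiplication of numerator sequences. First I would observe that whenever a series is written in the form $\sum_{n \geq 0} c_n(w) \frac{z^n}{n!}$, the exponential Hadamard product $\hadamard$ (taken with respect to $z$) acts on two such series by multiplying their numerators $c_n(w)$ term by term, by its very definition. The whole argument is therefore a matter of identifying, for each of the four series $\eA$, $\gA$, $\egraph$, $\gset$, the sequence playing the role of this numerator, and then checking that the products match.

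For the first identity, I would read off from the definitions that the numerator of $\eA(z,w)$ is $a_n(w)$ and the numerator of $\gA(z,w)$ is $a_n(w)/(1+w)^{\binom{n}{2}}$, while \autoref{th:first_egf_ggf} identifies the numerator of $\egraph(z,w)$ as $(1+w)^{\binom{n}{2}}$. Forming $\egraph(z,w) \hadamard \gA(z,w)$ then multiplies these numerators to give $(1+w)^{\binom{n}{2}} \cdot a_n(w)/(1+w)^{\binom{n}{2}} = a_n(w)$, which is exactly the numerator of $\eA(z,w)$; hence the two series coincide.

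For the second identity I would argue symmetrically, using that the numerator of $\gset(z,w)$ is $1/(1+w)^{\binom{n}{2}}$, again by \autoref{th:first_egf_ggf}. Forming $\gset(z,w) \hadamard \eA(z,w)$ multiplies $1/(1+w)^{\binom{n}{2}}$ by $a_n(w)$, producing $a_n(w)/(1+w)^{\binom{n}{2}}$, which is precisely the numerator of $\gA(z,w)$, so these two series agree as well.

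I do not expect a genuine obstacle here: the statement is essentially a bookkeeping identity, with $w$ a passive parameter throughout and the factor $(1+w)^{\binom{n}{2}}$ acting as an invertible normalization between the two transforms. The only point requiring care is to keep consistent track of whether this factor sits in the numerator or the denominator of each series, so that the two occurrences cancel (first identity) or combine (second identity) correctly; once the numerator sequences have been correctly extracted via \autoref{th:first_egf_ggf}, the conclusion is immediate.
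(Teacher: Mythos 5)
Your proposal is correct and is essentially the paper's own proof: both arguments unwind the definitions of EGF, GGF, and the exponential Hadamard product, use \autoref{th:first_egf_ggf} to identify the coefficient sequences of $\egraph$ and $\gset$, and verify that the termwise products of numerators yield $a_n(w)$ and $a_n(w)/(1+w)^{\binom{n}{2}}$ respectively. No gaps; the bookkeeping on the placement of the factor $(1+w)^{\binom{n}{2}}$ is exactly the content of the paper's two displayed computations.
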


\begin{proof}
Consider a family $\fA$ with sequence of series $(a_n(w))$.
By definition of the EGF, GGF and exponential Hadamard product, we have
\[
  \egraph(z) \hadamard \gA(z)
  = \bigg( \sum_n (1+w)^{\binom{n}{2}} \frac{z^n}{n!} \bigg) \hadamard \sum_n \frac{a_n(w)}{(1+w)^{\binom{n}{2}}} \frac{z^n}{n!}
  = \sum_n a_n(w) \frac{z^n}{n!}
  = \eA(z),
\]
and similarly
\[
  \gset(z) \hadamard \eA(z)
  = \bigg( \sum_n \frac{1}{(1+w)^{\binom{n}{2}}} \frac{z^n}{n!} \bigg) \hadamard \sum_n a_n(w) \frac{z^n}{n!}
  = \sum_n \frac{a_n(w)}{(1+w)^{\binom{n}{2}}} \frac{z^n}{n!}
  = \gA(z).
\]
\end{proof}

%%%%%%%%%%
  \subsection{Generating functions of various digraph families}
%%%%%%%%%%

%The next two propositions illustrate the power of the symbolic method.
The next proposition comes from~\cite{G96, robinson1977counting, stanley1973acyclic}. We present a proof relying on the arrow product.
%The first result and its proof are classic.
%The second comes from~\cite{G96, stanley1973acyclic, robinson1977counting}.
%According to our knowledge, the usage of the symbolic method for the second identity is new.

\begin{comment}
\begin{proposition} \label{th:connected}
The EGF $\econnected(z,w)$ of connected graphs is
\[
  \econnected(z,w) = \log(\egraph(z,w)).
\]
\end{proposition}

\begin{proof}
Since a graph is a set of connected graphs, we have, applying the symbolic
method (\autoref{th:symbolic_method}),
\[
  \egraph(z,w) = e^{\econnected(z,w)},
  \quad \text{ so } \quad
  \econnected(z,w) = \log(\egraph(z,w)).
\]
\end{proof}
\end{comment}

\begin{proposition} \label{th:dag}
The GGF of directed acyclic graphs (DAGs)
with an additional variable $u$ marking the sources
(\ie there are no oriented edge pointing to those vertices) is
\[
  \gdag(z,w,u) = \frac{\gset((u-1) z,w)}{\gset(- z,w)}.
\]
\end{proposition}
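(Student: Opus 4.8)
The plan is to prove the cross-multiplied identity
\[
\gdag(z,w,u)\,\gset(-z,w) = \gset((u-1)z,w),
\]
which is equivalent to the stated formula because $\gset(-z,w)$ is invertible as a power series in $z$ (its constant term is $1$). The left-hand side is a product of GGFs, so by the arrow product (\autoref{def:arrow_product}), whose GGF is exactly the product of the factors' GGFs, I read it as the GGF of the arrow product of the family of DAGs (with $u$ marking sources) and the family of sets in which each vertex carries the weight $-1$ coming from the substitution $z \mapsto -z$. Concretely, the left-hand side is a signed sum over pairs $(G,B)$: a DAG $G$ on a label set $K$ with its sources marked by $u$, a set $B$ on the complementary labels contributing the sign $(-1)^{|B|}$, and an arbitrary collection of edges oriented from $G$ into $B$.

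First I would identify the combined object. Since the only added edges point into $B$ and $B$ carries no internal edges, the union $H := G \cup B \cup (\text{edges } G \to B)$ is again a DAG in which every vertex of $B$ is a sink; conversely every DAG $H$ equipped with an arbitrary subset $B$ of its sinks arises exactly once this way, with $G = H - B$. This lets me reorganize the left-hand side as
\[
\sum_{n \geq 0} \frac{z^n}{n!\,(1+w)^{\binom{n}{2}}} \sum_{H} w^{e(H)} \sum_{B \subseteq \mathrm{Sink}(H)} (-1)^{|B|}\, u^{\#\{\text{sources of } H-B\}},
\]
the inner sums ranging over DAGs $H$ on $\{1,\dots,n\}$ and subsets $B$ of their sinks.

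The crucial step, which I expect to carry the argument, is that deleting a set of sinks leaves the in-degree of every surviving vertex unchanged: a sink has out-degree $0$, so it is never the tail of an edge entering a surviving vertex. Hence the sources of $H-B$ are precisely the sources of $H$ outside $B$, and a source can lie in $B$ only if it is isolated. This factors the inner sum over the sinks of $H$: each non-isolated sink contributes $(1-1)=0$, while each isolated sink contributes $(1-u^{-1})$, so only DAGs with \emph{no} non-isolated sink survive.

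Finally I would invoke the topological fact that a DAG with at least one edge has a non-isolated sink — the last vertex of a longest directed path is a sink and has positive in-degree. Thus the only surviving $H$ are the edgeless graphs. For the unique edgeless graph on $n$ vertices all $n$ vertices are isolated sources, so the inner sum collapses to $u^n(1-u^{-1})^n = (u-1)^n$, and summing over $n$ yields exactly $\gset((u-1)z,w)$. The main obstacle is therefore not the inclusion–exclusion itself, which is immediate once set up, but the bookkeeping that aligns the $u$-marking of the sources of $G = H-B$ with the sources of $H$ across the sink deletion.
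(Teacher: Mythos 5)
Your proof is correct, but it takes a genuinely different route from the paper's. The paper works entirely at the generating-function level: marking a distinguished subset of sources and peeling it off as the \emph{left} factor of an arrow product gives the functional equation $\gdag(z,w,u+1)=\gset(zu,w)\,\gdag(z,w)$; substituting $u=-1$ and using the single combinatorial fact that the only sourceless DAG is the empty one yields $\gdag(z,w)=1/\gset(-z,w)$, and the shift $u\mapsto u-1$ finishes. You instead verify the cross-multiplied identity $\gdag(z,w,u)\,\gset(-z,w)=\gset((u-1)z,w)$ at the coefficient level, and your signed set sits on the \emph{opposite} side of the arrow product: it receives the added edges, so its vertices become sinks, and your cancellation runs over subsets $B$ of sinks rather than sources. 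This requires two facts the paper never uses (deleting sinks does not change in-degrees of surviving vertices, so the sources of $H-B$ are exactly the sources of $H$ outside $B$; and a DAG with at least one edge has a non-isolated sink), but in exchange you never compute $\gdag(z,w)$ separately, and the inclusion--exclusion is made fully explicit: each non-isolated sink contributes a factor $0$, so only edgeless graphs survive, each contributing $(u-1)^n$. The trade-off: the paper's argument is shorter and is precisely the template reused for \autoref{th:source_like}, while yours is a self-contained verification (it presupposes the answer rather than deriving it) that exposes the combinatorics of the cancellation and shows the formula is equally reachable from the sink side, in line with the conclusion's remark that marking sinks can be handled as well. One cosmetic point: the factor $(1-u^{-1})$ per isolated sink should be read as shorthand; since every isolated vertex is also a source, the product $u^{s(H)}(1-u^{-1})^{i(H)}$ equals the polynomial $u^{s(H)-i(H)}(u-1)^{i(H)}$, where $i(H)$ counts isolated vertices and $s(H)\geq i(H)$, so no negative powers of $u$ actually occur.
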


\begin{proof}
The GGF of DAGs where each source is either marked,
or left unmarked by the variable $u$, is $\gdag(z,w,u+1)$
(see~\autoref{th:symbolic_method}).
Such a DAG is decomposed as the arrow product
of a set (the marked sources) with a digraph
(\autoref{fig:dag:construction}), so
\[
  \gdag(z,w,u+1) = \gset(z u, w) \gdag(z,w).
\]
Observe that $\gdag(z,w,0)$ is the GGF of DAGs without any source.
The only DAG satisfying this property is the empty DAG, so
$\gdag(z,w,0) = 1$.
Taking $u = -1$ gives $1 = \gset(-z,w) \gdag(z,w)$,
so $\gdag(z,w) = 1 / \gset(-z,w)$.
Replacing $u$ with $u-1$ gives $\gdag(z,w,u) = \gset((u-1) z,w) / \gset(-z,w)$.
This second proof also illustrates the translation
into the generating function world
of the inclusion-exclusion principle.
\end{proof}

%%%%%%%%%%
%  \subsection{Strongly connected graphs}
%%%%%%%%%%

Let us recall that the \emph{condensation} of a digraph
is the directed acyclic graph (DAG) obtained from it
by contracting each strongly connected component (SCC)
to a vertex.
The SCCs of the digraph corresponding to sources of the condensation
are called \emph{source-like SCCs}.
The proof from \autoref{th:dag}
for expressing the generating function of DAGs with marked sources
is now extended to digraphs with marked source-like components
and SCCs belonging to a given family (similar proof published by~\cite{RobinsonGeneral}).

\begin{theorem} \label{th:source_like}
Consider a nonempty family $\mA$ of SCCs
(the empty digraph is not strongly connected by convention,
so it cannot belong to $\mA$).
The GGF of digraphs where all SCCs belong to $\mA$ is equal to
\[
  \gdigraph_{\mA}(z,w) = \frac{1}{\gset(w,z) \hadamard e^{-\eA(z,w)}}.
\]
The GGF of the same digraph family where an additional variable $u$
marks the source-like components is
\[
  \gdigraph_{\mA}(z,w,u) = \frac{\gset(w,z) \hadamard e^{(u-1)
  \eA(z,w)}}{\gset(w,z) \hadamard e^{ {- \eA}(z,w)}}.
\]
\end{theorem}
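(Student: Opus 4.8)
The plan is to follow the same three-step template as the proof of \autoref{th:dag}: set up a marked generating function, decompose a marked object as an arrow product, and then extract the unmarked series from a boundary condition. Let $\gdigraph_{\mA}(z,w,u)$ be the GGF in which $u$ marks the source-like SCCs. By \autoref{th:symbolic_method}, the series $\gdigraph_{\mA}(z,w,u+1)$ is then the GGF of the same digraphs carrying a \emph{distinguished subset} of their source-like components, with $u$ now recording the size of that subset. The point of passing to a distinguished subset, rather than insisting on marking all source-like SCCs at once, is that it makes the structural decomposition clean, exactly as the $u+1$ substitution did for DAGs.

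First I would establish the decomposition. Given a digraph with all SCCs in $\mA$ together with a distinguished set $T$ of source-like SCCs, let $S$ be the union of the components in $T$ and let $R$ be the digraph induced on the remaining vertices. Because the components of $T$ are source-like, no edge enters $S$; in particular there is no edge between two of them, so $S$ is a plain disjoint union of SCCs from $\mA$, while $R$ is an \emph{arbitrary} digraph with all SCCs in $\mA$, and every edge joining the two blocks points from $S$ to $R$. This is precisely an arrow product (\autoref{def:arrow_product}, \autoref{fig:scc:construction}). Conversely, any arrow product of such an $S$ and such an $R$ is admissible: since the added edges all point from $S$ to $R$, no new cycle can straddle the two blocks, so the SCCs of the result are exactly those of $S$ and of $R$ (hence all lie in $\mA$), and each component of $S$ receives no incoming edge and is therefore source-like. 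The map $(S,R,\text{edges})\mapsto(\text{digraph},T)$ is thus a bijection. The family of sets $S$, with each component marked by $u$, has EGF $e^{u\,\eA(z,w)}$, hence GGF $\gset(z,w)\hadamard e^{u\,\eA(z,w)}$ by \autoref{th:translation}, while $R$ contributes $\gdigraph_{\mA}(z,w)$. Since the GGF of an arrow product is the product of the GGFs, we obtain
\[
  \gdigraph_{\mA}(z,w,u+1)
  = \big(\gset(z,w)\hadamard e^{u\,\eA(z,w)}\big)\,\gdigraph_{\mA}(z,w),
\]
and replacing $u$ by $u-1$ yields $\gdigraph_{\mA}(z,w,u) = \big(\gset(z,w)\hadamard e^{(u-1)\eA(z,w)}\big)\,\gdigraph_{\mA}(z,w)$.

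It remains to pin down $\gdigraph_{\mA}(z,w)$ from a boundary value. Setting $u=0$ selects the digraphs with all SCCs in $\mA$ and \emph{no} source-like SCC; but the condensation of any nonempty digraph is a nonempty DAG, which always has a source, so the only such digraph is the empty one and $\gdigraph_{\mA}(z,w,0)=1$. Plugging $u=0$ into the displayed relation gives $1 = \big(\gset(z,w)\hadamard e^{-\eA(z,w)}\big)\,\gdigraph_{\mA}(z,w)$, hence the first formula, and substituting this back produces the second. The step I expect to require the most care is the bijective justification of the decomposition: one must check that the one-directional arrows neither merge nor create SCCs, and that allowing $R$ to be completely arbitrary (so it may still contain its own, undistinguished, source-like components) is exactly compensated by marking only a \emph{subset} of the source-like SCCs. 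This is the combinatorial content of the inclusion--exclusion hidden in the $u\mapsto u-1$ shift.
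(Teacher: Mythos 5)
Your proposal is correct and follows essentially the same route as the paper's proof: mark a subset of source-like SCCs via the $u+1$ substitution, decompose as the arrow product of a set of SCCs from $\mA$ with an arbitrary digraph of the same family, and recover the unmarked series from the boundary value (your $u=0$ after shifting is the paper's $u=-1$ before shifting). The only difference is that you spell out the bijective verification of the decomposition and the justification that $\gdigraph_{\mA}(z,w,0)=1$, both of which the paper leaves implicit.
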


\begin{proof}
The GGF of the digraph family considered,
where each source-like component is either marked,
or left unmarked by the variable $u$, is $\gdigraph_{\mA}(z,w,u+1)$
(see~\autoref{th:symbolic_method}).
Such a digraph is decomposed as the arrow product
of a set  of SCCs from $\mA$ (the marked source-like components) with a digraph, so
\[
  \gdigraph_{\mA}(z,w,u+1) = \left(\gset(z,w) \hadamard e^{u \eA(z,w)} \right) \gdigraph_{\mA}(z,w).
\]
Taking $u = -1$ gives
\[
  1 = \left(\gset(z,w) \hadamard e^{- \eA(z,w)} \right) \gdigraph_{\mA}(z,w),
  \text{ so } \gdigraph_{\mA}(z,w) = \left(\gset(z,w) \hadamard e^{- \eA(z,w)} \right)^{-1}.
\]
Replacing $u$ with $u-1$ gives
$\gdigraph_{\mA}(z,w,u) = \left(\gset(z,w) \hadamard e^{(u-1) \eA(z,w)} \right) \gdigraph_{\mA}(z,w)$.
\end{proof}

When the family $\mA$ contains only
the SCC with one vertex and no edges, so $\eA(z,w) = z$,
then $\gdigraph_{\mA}(z,w)$ becomes the GGF of DAGs.
Thus, \autoref{th:source_like} generalizes \autoref{th:dag}.
Several interesting corollaries follow.
The first one is our new proof for the EGF of strongly connected digraphs
(original result from~\cite{liskovets1973,liskovets2000some,RobinsonGeneral}).

\begin{corollary} \label{th:strongly_connected}
The exponential generating function of strongly connected digraphs is equal to
\[
  \estrongly(z,w) = - \log \left( \egraph(z,w) \hadamard \frac{1}{\egraph(z,w)}
  \right).
\]
\end{corollary}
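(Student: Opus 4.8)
The plan is to specialize \autoref{th:source_like} to the case where $\mA$ is the family of \emph{all} strongly connected digraphs, and then to undo the Hadamard transform that links GGFs and EGFs. First I would observe that if $\mA$ consists of every SCC, then the family of digraphs all of whose strongly connected components lie in $\mA$ is simply the family of all digraphs: every digraph decomposes into its SCCs, and those automatically belong to $\mA$. Under this choice one has $\eA(z,w) = \estrongly(z,w)$ and $\gdigraph_{\mA}(z,w) = \gdigraph(z,w) = \egraph(z,w)$, the last equality coming from \autoref{th:first_egf_ggf}. The first formula of \autoref{th:source_like} then reads $\egraph(z,w) = 1 / \bigl(\gset(z,w) \hadamard e^{-\estrongly(z,w)}\bigr)$, equivalently
\[
  \gset(z,w) \hadamard e^{-\estrongly(z,w)} = \frac{1}{\egraph(z,w)}.
\]

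Next I would strip off the $\gset$-factor on the left by taking the exponential Hadamard product of both sides with $\egraph(z,w)$. The conceptually clean justification runs through \autoref{th:translation}: the left-hand side $\gset(z,w) \hadamard e^{-\estrongly(z,w)}$ is exactly the GGF attached to the formal EGF $e^{-\estrongly(z,w)}$, so applying $\egraph(z,w) \hadamard (\cdot)$ converts it straight back to that EGF. Concretely, $\egraph(z,w) \hadamard \gset(z,w)$ is the series whose $n$th coefficient is $(1+w)^{\binom{n}{2}} (1+w)^{-\binom{n}{2}} = 1$, namely $e^z$, which is the neutral element for the exponential Hadamard product; together with associativity of $\hadamard$ this gives
\[
  \egraph(z,w) \hadamard \frac{1}{\egraph(z,w)}
  = \egraph(z,w) \hadamard \Bigl(\gset(z,w) \hadamard e^{-\estrongly(z,w)}\Bigr)
  = e^{-\estrongly(z,w)}.
\]

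Finally I would take formal logarithms. This is legitimate because $\egraph(z,w) \hadamard 1/\egraph(z,w)$ has constant term $1$ (both factors have $[z^0] = 1$, and the Hadamard product multiplies constant terms), so its logarithm exists as a formal power series; negating yields the claimed identity $\estrongly(z,w) = -\log\bigl(\egraph(z,w) \hadamard 1/\egraph(z,w)\bigr)$.

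I expect the only genuinely delicate point to be the bookkeeping in the middle step, that is, recognizing that $\egraph \hadamard \gset$ acts as the identity $e^z$ and that $\hadamard$ is associative, so that multiplying by $\egraph$ precisely cancels the $\gset$ introduced by \autoref{th:source_like}. The key idea, letting $\mA$ be the family of all SCCs so that $\gdigraph_{\mA}$ degenerates to $\egraph$ while $\eA$ becomes $\estrongly$, requires no computation; everything downstream is routine manipulation of formal power series.
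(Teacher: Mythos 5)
Your proposal is correct and follows essentially the same route as the paper's own proof: specializing \autoref{th:source_like} to the family of all SCCs, cancelling the $\gset$ factor via a Hadamard product with $\egraph(z,w)$, identifying $\gdigraph(z,w)$ with $\egraph(z,w)$ by \autoref{th:first_egf_ggf}, and taking logarithms. The only difference is cosmetic (you substitute $\gdigraph = \egraph$ at the start rather than at the end, and you spell out the neutrality of $e^z$ and associativity of $\hadamard$ that the paper leaves implicit).
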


\begin{proof}
When $\mA$ is the family of all SCCs,
the first result of \autoref{th:source_like} becomes
\[
  \gdigraph(z,w) = \frac{1}{\gset(w,z) \hadamard e^{- \estrongly(z,w)}}.
\]
By inversion and Hadamard product with $\egraph(z,w)$, we obtain
\[
  e^{- \estrongly(z,w)} = \egraph(z,w) \hadamard \frac{1}{\gdigraph(z,w)}.
\]
Replacing $\gdigraph(z,w)$ with $\egraph(z,w)$
(see \autoref{th:first_egf_ggf})
and taking the logarithm gives the final result.
\end{proof}

This formula enables fast computation of the numbers
of strongly connected digraphs:
$\bigO(n m \log(n + m))$ arithmetic operations
to compute the array of SCCs with at most $n$ vertices and at most $m$ edges,
$\bigO(n \log(n))$ for the SCCs with at most $n$ vertices without edge constraint.
The next corollary might prove useful to investigate
the birth of the giant SCC in random digraph,
following~\cite{JKLP93}.

\begin{corollary}
Consider a nonempty SCC family $\mB$.
The GGF of digraphs with a variable $u$ marking
the number of SCCs from $\mB$ is
\[
  \frac{1}{\gset(w,z) \hadamard e^{(1-u) \eB(z,w) - \estrongly(z,w)}}.
\]
\end{corollary}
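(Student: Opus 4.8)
The plan is to reduce the statement to the first formula of \autoref{th:source_like} by reinterpreting the marking of $\mB$-components as a reweighting of the SCC building blocks. Recall that in \autoref{th:source_like} the family $\mA$ of admissible SCCs enters the construction only through its EGF $\eA(z,w)$: the arrow-product decomposition that produces the formula never inspects which particular SCC sits at a given place, it merely multiplies the contributions of the building bricks. This suggests that if we attach a formal weight to each SCC---namely $u$ to those lying in $\mB$ and $1$ to the others---then running the very same construction will record the total number of $\mB$-components through a single factor of $u$ per such component.

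Concretely, I would first compute the EGF of this weighted family of SCCs. Taking $\mA$ to be the family of all SCCs (so that the digraphs counted are all digraphs), the components in $\mB$ contribute $\eB(z,w)$ and those outside $\mB$ contribute $\estrongly(z,w) - \eB(z,w)$; assigning the weight $u$ to the former yields the weighted EGF
\[
  u\,\eB(z,w) + \bigl(\estrongly(z,w) - \eB(z,w)\bigr)
  = \estrongly(z,w) - (1-u)\,\eB(z,w).
\]
Since every digraph is assembled from these weighted bricks by the arrow-product recursion underlying \autoref{th:source_like}, and each $\mB$-SCC of the finished digraph is counted exactly once with its weight $u$, the GGF we seek is obtained from the first formula of \autoref{th:source_like} by the substitution $\eA(z,w) \mapsto \estrongly(z,w) - (1-u)\,\eB(z,w)$, giving
\[
  \frac{1}{\gset(z,w) \hadamard e^{-\bigl(\estrongly(z,w) - (1-u)\eB(z,w)\bigr)}}
  = \frac{1}{\gset(z,w) \hadamard e^{(1-u)\eB(z,w) - \estrongly(z,w)}},
\]
which is exactly the claimed expression.

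The step needing the most care is the justification that the substitution is legitimate, that is, that marking a subfamily of the SCCs by a formal variable commutes with the whole derivation of \autoref{th:source_like}. The cleanest way to secure this is to rerun that proof with the weighted bricks in place: one checks that the arrow-product decomposition of a digraph into its source-like SCCs and the remaining digraph respects the weight, because the multiset of all SCCs---and hence the exponent $\#\{\mB\text{-SCCs}\}$---is the disjoint union of the source-like SCCs peeled off at each stage with the SCCs of the residual digraph. Thus the weight $u^{\#\{\mB\text{-SCCs}\}}$ factors multiplicatively across the recursion exactly as the plain edge weight $w$ does, so the functional equation and the ensuing inclusion--exclusion (setting the source-marking variable to $-1$) go through verbatim with $\eA$ replaced by $\estrongly - (1-u)\eB$. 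I expect no further obstacle once this multiplicativity is noted.
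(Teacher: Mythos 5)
Your proposal is correct and matches the paper's own proof: both apply the first formula of \autoref{th:source_like} with $\mA$ taken to be the family of all SCCs carrying a marking variable $u$ on the $\mB$-components, whose weighted EGF is $\estrongly(z,w) + (u-1)\eB(z,w)$, exactly as you computed. Your additional verification that the marking weight factors multiplicatively through the arrow-product recursion is a sound (and slightly more explicit) justification of what the paper leaves implicit in its use of marking variables.
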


\begin{proof}
When $\mA$ is the family of all SCCs,
with an additional variable $u$ marking the SCCs from $\mB$,
then $\eA(z,w,u) = \estrongly(z,w) + (u - 1) \eB(z,w)$,
and the first result of \autoref{th:source_like}
finishes the proof.
\end{proof}

%%%%%%%
  \subsection{Initially connected digraphs}
%%%%%%%

\emph{Initially connected digraphs} are defined as
digraphs where any vertex is reachable from the vertex with label \(1\) 
via an oriented path.
Their analysis has been linked to the study of SCCs,
so we provide or recall some results on them for completeness.

\begin{lemma}
For a given number of vertices and edges,
initially connected digraphs with one distinguished vertex are in bijection
with digraphs which have a unique source-like component,
and where one vertex of that component is distinguished.
\end{lemma}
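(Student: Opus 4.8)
The plan is to first establish the structural dictionary that makes the two sides comparable, and then to realize the bijection by a single label transposition.

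First I would prove the characterization that underlies the whole statement: \emph{a digraph is initially connected if and only if it has a unique source-like SCC and the vertex labelled $1$ belongs to it.} The key observation is that a source-like SCC receives no edge from any other component, since it is a source of the condensation; hence its vertices can only be reached from inside the component itself. Consequently, in an initially connected digraph vertex $1$ must lie inside every source-like SCC, and as the SCCs are disjoint there can be only one, necessarily containing vertex $1$. Conversely, if there is a single source-like SCC containing vertex $1$, then in the condensation every component is reachable from this unique source (every vertex of a finite DAG is reachable from a source), so in the digraph every vertex is reachable from vertex $1$.

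With this characterization in hand, I would define the map on the left-hand family as follows. Given an initially connected digraph $D$ together with a distinguished vertex $v$, let $\sigma = (1\ v)$ be the transposition exchanging the labels $1$ and $v$, and send $(D,v)$ to the pair $(\sigma \cdot D,\ v)$, where $\sigma \cdot D$ is $D$ with its labels permuted by $\sigma$ and $v$ remains the distinguished vertex. Relabelling preserves the number of vertices and of edges, and it carries the unique source-like SCC $C$ of $D$ (which contains $1$) to the unique source-like SCC $\sigma(C)$ of $\sigma \cdot D$, which contains $\sigma(1)=v$. Thus the image genuinely is a digraph with a unique source-like component in which the distinguished vertex $v$ lies, i.e.\ an element of the right-hand family; note that the case $v=1$ gives $\sigma=\mathrm{id}$ and causes no trouble.

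Finally I would check bijectivity by observing that the construction is an involution: since $\sigma^2=\mathrm{id}$, applying the same recipe to $(\sigma \cdot D, v)$ (transposing $1$ and $v$ again) returns $(D,v)$. This also describes the reverse direction directly: starting from a digraph $D'$ with a unique source-like SCC $S$ and a distinguished vertex $w \in S$, the transposition $(1\ w)$ moves $w$ onto label $1$ and hence moves label $1$ into the unique source-like component, producing an initially connected digraph by the characterization above. I expect the only delicate point to be the characterization itself, specifically arguing cleanly that source-like components are reachable only from within; once that is secured, the bijection is essentially immediate, being a self-inverse relabelling that visibly respects the grading by vertices and edges that the statement requires.
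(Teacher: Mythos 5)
Your proof is correct and takes essentially the same approach as the paper's: both rest on the characterization that a digraph is initially connected exactly when vertex $1$ lies in its unique source-like SCC, and both realize the bijection by transposing the label $1$ with the distinguished vertex. The only difference is cosmetic --- you apply the transposition uniformly, getting a self-inverse map (and you spell out the characterization, which the paper asserts implicitly), whereas the paper splits into cases and swaps only when the distinguished vertex lies outside the source-like component.
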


\begin{proof}
Let $\mA$ and $\mB$ denote the two digraph families from the lemma.
Consider a digraph $a \in \mA$.
Since $a$ is initially connected, it contains exactly one source-like SCC.
If the distinguished vertex belongs to the source-like SCC,
then $a \in \mB$.
Otherwise, by switching the distinguished vertex with the vertex of label $1$,
we obtain a digraph from $\mB$.
Reciprocally, if the distinguished vertex of a digraph $b \in \mB$
is in the same SCC as the vertex $1$, then $b \in \mA$.
Otherwise, a digraph from $\mA$ is obtained by switching those two vertices.
\end{proof}

%Our motivation for introducing initially connected digraphs
%is that 
The following lemma provides a relation between
initially connected digraphs and connected graphs
(\cite{liskovets1969initiallyconnected}, proof also available in the conclusion of \cite{JKLP93}).

\begin{lemma} \label{th:initially_connected}
The GGF of initially connected digraphs is equal to the EGF of connected graphs
\[
  \ginitially(z,w) = \econnected(z,w) = \log(\egraph(z,w)).
\]
%where the EGF of connected graphs has the following classical expression
%\[
%  \econnected(z) = \log(\egraph(z)) = \log \bigg( \sum_{n \geq 0} %(1+w)^{\binom{n}{2}} \frac{z^n}{n!} \bigg).
%\]
\end{lemma}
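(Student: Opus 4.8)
The statement bundles two equalities. The second, $\econnected(z,w) = \log(\egraph(z,w))$, is immediate from the symbolic method: a graph is a set of its connected components, so $\egraph(z,w) = e^{\econnected(z,w)}$ by \autoref{th:symbolic_method}, and taking logarithms gives the claim. So the real content is $\ginitially(z,w) = \log(\egraph(z,w))$, and the plan is to obtain it from the arrow product through a reachability decomposition.

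First I would root the objects. Consider an arbitrary digraph together with a distinguished vertex $r$; by \autoref{th:symbolic_method} the GGF of such rooted digraphs is $z \partial_z \gdigraph(z,w)$. Given such an object, let $S$ be the set of vertices reachable from $r$ by a directed path (with $r \in S$). Since no edge can leave $S$ (or its endpoint would be reachable), the digraph induced on $S$ is initially connected from $r$, the digraph induced on the complement $\bar S = V \setminus S$ is arbitrary, and the only edges joining the two blocks run from $\bar S$ into $S$. This is precisely the arrow product (\autoref{def:arrow_product}) of the family of all digraphs (placed on $\bar S$) with the family of rooted initially connected digraphs (placed on $S$, carrying the distinguished vertex). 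Conversely, in any such arrow product the reachable set of the root is exactly $S$, because the added edges only point from $\bar S$ to $S$; hence the decomposition is a bijection.

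By relabeling symmetry the GGF of rooted initially connected digraphs is $z \partial_z \ginitially(z,w)$ (choosing the root as any of the $n$ vertices multiplies the $n$-th coefficient by $n$). The product rule for the arrow product then yields
\[
  z \partial_z \gdigraph(z,w) = \gdigraph(z,w) \cdot z \partial_z \ginitially(z,w).
\]
Using $\gdigraph(z,w) = \egraph(z,w)$ (\autoref{th:first_egf_ggf}), I would cancel the common factor $\egraph(z,w)$ together with the prefactor $z$ to get $\partial_z \ginitially(z,w) = \partial_z \egraph(z,w) / \egraph(z,w) = \partial_z \log(\egraph(z,w))$. Since $\ginitially(0,w) = 0$ and $\log(\egraph(0,w)) = \log 1 = 0$, integrating in $z$ gives $\ginitially(z,w) = \log(\egraph(z,w)) = \econnected(z,w)$, as desired.

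The main obstacle is the bijective bookkeeping of the middle paragraph: one must verify that the reachability decomposition matches the arrow product exactly (no edge escapes $S$, and the root's reachable set is recovered unchanged after the product), and that distinguishing the root is faithfully encoded by the operator $z \partial_z$ applied to $\ginitially$. Once this combinatorial identity is set up correctly, the passage to generating functions and the integration are routine.
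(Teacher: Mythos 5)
Your proof is correct, but it takes a genuinely different route from the paper's. The paper defers this lemma to \cite{liskovets1969initiallyconnected} and the conclusion of \cite{JKLP93}, and the proof it keeps in its own source is a coefficient-level bijection: writing $\ic_n(w)$, $c_n(w)$ and $g_n(w) = (1+w)^{\binom{n}{2}}$ for the edge-enumerating polynomials of initially connected digraphs, connected graphs and all graphs, it establishes $\ic_n(w) = g_n(w)\, c_n(w)$ by splitting the oriented edges of an initially connected digraph into two undirected graphs $C$ and $G$ on the same $n$ vertices --- an edge joining $i$ and $j$ goes to $C$ or to $G$ according to whether it points away from or towards the vertex $1$ in the order given by distance from $1$, ties broken by label --- and then checking that $C$ is connected and that the construction is reversible. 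You instead stay entirely inside the paper's symbolic framework: you root the digraph, decompose it along the reachable set of the root as an arrow product of the family of all digraphs with the family of rooted initially connected digraphs, deduce
\[
  z \partial_z \gdigraph(z,w) = \gdigraph(z,w) \cdot z \partial_z \ginitially(z,w),
\]
and integrate the resulting logarithmic derivative; both proofs treat the second equality $\econnected(z,w) = \log \egraph(z,w)$ identically via the set construction. The trade-off is this: the paper's bijection is combinatorially explicit and explains the identity term by term, but its inverse map needs care (one must verify that the distance layering of the digraph is recoverable from $C$ alone, a point the paper glosses over), whereas your argument needs no delicate bijection --- the only combinatorial inputs are the reachability decomposition and the relabeling symmetry giving the GGF $z \partial_z \ginitially(z,w)$ for rooted initially connected digraphs, both of which you justify --- and it has the methodological merit of reusing the arrow product in exactly the same pattern as \autoref{th:dag} and \autoref{th:source_like}, which is the unifying point the paper is advocating.
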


%%%%%%%%%%%%%%%%%%%%%%%%%%%%%%%%%%%%%%%%%%%%%%%%%%%%%%%%%%%%%%%%%%%%
    \section{Conclusion}
%%%%%%%%%%%%%%%%%%%%%%%%%%%%%%%%%%%%%%%%%%%%%%%%%%%%%%%%%%%%%%%%%%%%

Many digraph families can be enumerated using
%the techniques presented, \eg
the same techniques: symbolic method enriched with the arrow product,
Hadamard product,
inclusion-exclusion and additional marking variables.
Marking sinks in DAGs and sink-like SCCs in digraphs can be achieved as well. %following the same lines.
The next challenge is the asymptotics of sparse DAGs, strongly connected graphs,
and, following~\cite{JKLP93}, digraphs phase transition.
%of such series.
%This would give access to a precise analysis of the phase transition
%of digraphs, following~\cite{JKLP93} and asymptotic number of strongly connected
%digraphs and DAGs in the sparse regime.

%%% (optional) Acknowledgment environment ...
% or it can be used also as \begin{acknowledgment}[any text]

\paragraph*{Bibliographic remark.}
The first English version paper we found
containing the elegant expression for the generating function
of strongly connected digraphs
recalled in \autoref{th:strongly_connected} is~\cite{liskovets2000some}.
It points to an earlier publication~\cite{liskovets1973} in Russian,
which contains the proof. Sadly, the authors were not aware of the existence of the
general method described in~\cite{robinson1973,RobinsonGeneral} during the
writing of the current paper.

\paragraph*{Acknowledgements.}
We would like to thank Cyril Banderier
for pointing out the reference \cite{liskovets2000some},
and Vlady Ravelomanana for introducing us to the topic
and for many fruitful discussions.


\begin{thebibliography}{99}

%%\bibitem{citlabel}
%%  Auhor F.,  Auhor S. and  Auhor T.,
%% {\em Title of the article or title of the book},
%% journal {\bf number of the journal} (year), pages--pages.
%%
%% f.e.
%%\bibitem{russo2}
%%Russo D., Bresler E., Shani U. and  Parker J. C.,
%%{\em Analysis of infiltration events in relation to determining soil
%%hydraulic properties by inverse problem methodology},
%% Water Resources Research {\bf27} (1991), 1361--1373.
%%%%
%% \bysame is used when the same author(s) should be repeated
%% \bibitem{citlabel}\bysame, {\em Title of the a ...

\bibitem{bergeron1998combinatorial}
  Bergeron F., Labelle G. and Leroux P.,
 {\em Combinatorial species and tree-like structures},
 Cambridge University Press {\bf 67} (1998).

\bibitem{FS09}
  Flajolet P. and Sedgewick R.,
 {\em Analytic Combinatorics},
 Cambridge University Press (2009).

\bibitem{GESSEL1995257}
  Gessel I.M.,
 {\em Enumerative applications of a decomposition for graphs and digraphs},
 Discrete Math. {\bf 139(1)} (1995), 257--271.

\bibitem{G96}
  Gessel I.M.,
 {\em Counting acyclic digraphs by sources and sinks},
 Discrete Math. {\bf 160(1--3)} (1996), 253--258.

\bibitem{gessel1996tutte}
  Gessel I.M. and Sagan B.E.,
 {\em The Tutte polynomial of a graph, depth-first search, and simplicial complex partitions},
 Electron. J. Comb. {\bf 3(2)} (1996), R9.

\bibitem{Goldschmidt}
Goldschmidt C. and Stephenson R.,
{\em The scaling limit of a critical random directed graph},
arXiv preprint arXiv:1905.05397 (2019).

\bibitem{JKLP93}
  Janson S.,  Knuth D.E., \L{}uczak T., and Pittel B.,
 {\em The Birth of the Giant Component},
 Random Struct. Algor. {\bf 4(3)} (1993), 233--358.

\bibitem{liskovets1969initiallyconnected}
  Liskovets V.A.,
 {\em Лисковец Валерий Анисимович. Подсчет корневых инициально связных ориентированных графов},
 Известия АН БССР {\bf 5} (1969), 23--32. [Enumeration of rooted initially connected oriented graphs, Izv. Akad. Nauk BSSR]

\bibitem{liskovets1969}
  Liskovets V.A.,
 {\em Лисковец Валерий Анисимович. Об одном рекуррентном методе подсчета графов с отмеченными вершинами},
 Доклады Академии наук {\bf 184(6)} (1969), 1284--1287 [On one recurrent method of counting graphs with marked vertices, DAN].


\bibitem{liskovets1970number}
  Liskovets V.A.,
 {\em The number of strongly connected directed graphs},
 Math. notes of the Academy of Sciences of the USSR {\bf 8(6)} (1970), 877--882.


\bibitem{liskovets1973}
  Liskovets V.A.,
 {\em Лисковец Валерий Анисимович. К перечислению сильно связных ориентированных графов.},
 ДАН БССР {\bf 17} (1973), 1077--1080. [A contribution to the enumeration of strongly connected digraphs, Dokl. Akad. Nauk BSSR]

\bibitem{liskovets2000some}
  Liskovets V.A.,
 {\em Some easily derivable integer sequences},
 J. Integer Seq. {\bf 3(2)} (2000).

\bibitem{luczak2009critical}
  \L{}uczak T. and Seierstad T.G.,
 {\em The critical behavior of random digraphs},
 Random Struct. Algor. {\bf 35(3)} (2009), 271--293.

\bibitem{robinson1973}
Robinson R.W.,
{\em Counting labeled acyclic digraphs},
New Directions in the Theory of Graphs, Proc. Third Ann Arbor Conf., Univ.
Michigan, Ann Arbor, Mich. (1973), 239--273.

\bibitem{robinson1977counting}
  Robinson R.W.,
 {\em Counting unlabeled acyclic digraphs},
 Combinatorial Math. V (1977), 28--43.

\bibitem{robinson1977strong}
  Robinson R.W.,
 {\em Counting strong digraphs},
 J. Graph Theory {\bf 1(2)} (1977), 189--190.

\bibitem{RobinsonGeneral}
  Robinson R.W.,
 {\em Counting digraphs with restrictions on the strong components},
 Combinatorics and Graph Theory, Proc. of the Summer School and Conf.
 {\bf 1} (1995), 343--354.

\bibitem{stanley1973acyclic}
  Stanley R.P.,
 {\em Acyclic orientations of graphs},
 Discrete Math. {\bf 5(2)} (1973), 171--178.

\bibitem{wright1971number}
  Wright E.M.,
 {\em The number of strong digraphs},
 B. Lond. Math. Soc. {\bf 3(3)} (1971), 348--350.

%\bibitem{K90}
%  Karp R.M.,
% {\em The Transitive Closure of a Random Digraph},
% Random Structures \& Algorithms {\bf 1} (1990), 73--94.

%\bibitem{}

\end{thebibliography}
\end{document}